\documentclass{tCON2e}

\numberwithin{equation}{section}

\usepackage{amsfonts,amsmath,amssymb}

\usepackage[longnamesfirst,sort]{natbib}% Citation support using natbib.sty
\bibpunct[, ]{(}{)}{;}{a}{,}{,}% Citation support using natbib.sty
% To set the list of references in 10 point font using natbib.sty

\usepackage[longnamesfirst,sort]{natbib}% Citation support using natbib.sty
\bibpunct[, ]{(}{)}{;}{a}{,}{,}% Citation support using natbib.sty
% To set the list of references in 10 point font using natbib.sty

% Style for IJC.
\newtheorem{theorem}{Theorem}[section]

\newtheorem{Th}[theorem]{Theorem}
\newtheorem{Cor}[theorem]{Corollary}
\newtheorem{Lem}[theorem]{Lemma}
\newtheorem{Ex}[theorem]{Example}
\newtheorem{Rem}[theorem]{Remark}

\numberwithin{equation}{section}

\begin{document}

%\jvol{00} \jnum{00} \jyear{} \jmonth{}

\title{Stabilization of Second Order Nonlinear Equations with Variable Delay}

\author{Leonid Berezansky$^{\rm a}$, 
Elena Braverman$^{\rm b}$$^{\ast}$\thanks{$^\ast$Corresponding author. Email: maelena@ucalgary.ca} 
and Lev Idels$^{\rm c}$ \\ \vspace{6pt}
$^{a}${\em{Dept. of Math, Ben-Gurion University of the Negev, Beer-Sheva 84105, Israel}};\\
$^{b}${\em{Dept. of Math \& Stats, Univ. of Calgary, 2500 University Dr. NW, Calgary, AB, Canada T2N1N4}};\\
$^{c}${\em{Dept. of Math, Vancouver Island University (VIU), 900 Fifth St. Nanaimo, BC, Canada  V9S5J5}} \\
\received{received April 2014} }

%\thanks{Partially supported by VIU grant.} \\
%   email:  {\tt{lev.idels@viu.ca}}

%\begin{document}
\maketitle

\begin{abstract}
For a wide class of second order nonlinear non-autonomous  models, we illustrate 
that combining proportional state control with the feedback that is proportional to the derivative of the chaotic signal,
allows to stabilize unstable motions of the system.
The delays are variable, which leads to more flexible controls permitting
delay perturbations; only delay bounds are significant for stabilization by a delayed control.
The results are applied to the sunflower equation which has
an infinite number of equilibrium points.
\end{abstract}

\begin{keywords} non-autonomous second order delay differential equations; 
stabilization; proportional feedback; a controller with damping;
derivative control; sunflower equation
\end{keywords}

%\begin{classcode} 
%93D15, 34K20, 34K35, 34H15, 93B05
%\end{classcode}\bigskip

\section{Introduction}

It is well known that control of dynamical systems is a classical subject in engineering sciences.
Time delayed feedback control is an efficient method for stabilizing unstable periodic orbits of
chaotic systems which are described by second order delay differential equations, see 
\cite{Bo,Do,Fr1,Fr2,Kim,Liu,Reit, Gabor2,Gabor1,Wan}. 
When introducing a control, we assume that the chosen equilibrium of an equation is unstable, 
and the controller will transform the unstable equation into an asymptotically or exponentially stable equation.
Instability tests for some autonomous delay models of the second order could be found, for example,
in \cite{Cah}.
Two basic proportional (adaptive) control models are widely used: 
standard feedback controllers $u(t)=K[x(t)-x^{\ast}]$ with
the controlling force proportional to the deviation of the system from the attractor,
where $x^{\ast}$ is an equilibrium of the equation,
and the delayed feedback control $u(t)=K[x(t-\tau(t))-x(t)]$, see
\cite{Boccal,Joh,Ko}.

Proportional control fails if there exist rapid changes to the system that come from an external source,
and to keep the system steady under an abrupt change,  a derivative control was used in \cite{ Bela, Reit, Vyh}, i.e.
$u(t)=\beta \frac{d}{dt}e(t)$,
where, for example, $e(t)=x(t-\tau)-x(t)$ or $e(t)=x(t)-x^{\ast}$.
In electronics, a simple operational amplifier differentiator
circuit will generate the continuous feedback signal which is proportional to the time derivative
of the voltage across the negative resistance, see \cite{Joh}.
A classical proportional control  does not stabilize
even linear ordinary differential equations; e.g. the equation $\ddot{x}=u(t)$ with the control
$u(t)=K[x(t-\tau(t))-x(t)]$ is not asymptotically stable for any $K$, since any constant is a solution of
this equation. The pure derivative control $u(t)=-\lambda\dot{x}(t)$ also does not stabilize all second order differential equations.
For example, the equation $\ddot{x}+ax(t)-ax(t-\tau)=u(t)$ with the control
$u(t)=-\lambda\dot{x}(t)$ is not asymptotically stable for any control since any constant is a solution of this equation.
Some interesting and novel results could be found in \cite{Ren,Ru,Si,Saberi2,Yan}.
For a linear non-autonomous model
$\dot{x}=A(t)x(t)+B(t) u(t)$ the effective multiple-derivative feedback controller 
$\displaystyle u(t)=\sum_{i=0}^{M-1} K_{i}x^{(i)}(t-h)$ was introduced in \cite{Saberi1},
and a special transformation was used to transform neutral-type DDE into a retarded DDE.
However, most of second order applied models are nonlinear, even the original pendulum equation.
The main focus of the paper is the control of nonlinear delay equations, some real world models are considered
in Examples~\ref{ex2},\ref{ex3},\ref{additional_example}.

In the present paper we study  a nonlinear second order delay differential equation 
\begin{equation}\label{3}
\ddot{x}(t)+\sum_{k=1}^l f_k(t,\dot{x}(g_k(t)))+\sum_{k=1}^m s_k(t,x(h_k(t)))=u(t),~~t \geq t_0,
\end{equation}
with the input or the controller $u(t)$, along with its linear version
\begin{equation}\label{1}
\ddot{x}(t)+\sum_{k=1}^l a_k(t)\dot{x}(g_k(t))
+\sum_{k=1}^m b_k(t)x(h_k(t))=u(t), ~~t \geq t_0.
\end{equation}
Both equations (\ref{1}) and (\ref{3}) satisfy for any $t_0\geq 0$ the initial condition
\begin{equation}\label{2}
x(t)=\varphi(t), ~\dot{x}(t)=\psi(t), ~t\leq t_0.
\end{equation}
We will assume that the initial value problem has a unique global solution on $[t_0,\infty)$
for all nonlinear equations considered in this paper, and
the following conditions are satisfied:
\\
(a1) $a_i, b_j$ are Lebesgue measurable and essentially bounded on $[0,\infty)$ functions,
$i=1,\dots,l$, $j=1,\dots,m $, which allows to define essential eventual limits
\begin{equation}\label{6}
\alpha=\limsup_{t\rightarrow\infty}\sum_{k=1}^l |a_k(t)|,~\beta=\limsup_{t\rightarrow\infty}\sum_{k=1}^m |b_k(t)|;
\end{equation}
(a2) $h_j, g_i$ are  Lebesgue measurable functions,
$
h_i(t)\leq t, g_i(t)\leq t,$ $\lim\limits_{t\to\infty} h_i(t)=\infty$, $\lim\limits_{t\to\infty}
g_i(t)=\infty$, $i=1,\dots,l$, $j=1,\dots,m $.

The paper is organized as follows.
In Section 2 we design a stabilizing damping control
$u(t)=\lambda_1 \dot{x}(t)+\lambda_2 (x(t)-x^{*})$ {\em for any linear non-autonomous equation} (\ref{1}). 
Under some additional condition on the functions $f_k$ and $s_k$, such control also stabilizes equations of type (\ref{3}). The 
results are based on stability tests recently obtained in  \cite{BBD, BBI}
for second order non-autonomous differential equations. 
We also prove in Section 2 that a strong enough controlling force, depending on the derivative and the present 
(and past) positions, can globally stabilize an equilibrium of the controlled equation. 
In Section 3 classical proportional delayed feedback controller $u(t)=K[x(t-\tau(t))-x(t)]$ is applied 
to stabilize a certain class of second order delay equations with a single delay involved in the state term only. 
We develop tailored feedback controllers and justify their application both analytically and numerically.

\section{Damping Control}

We will use auxiliary results recently obtained in \cite{BBD,BBI}.

\begin{Lem}\label{lemma1}\cite[Corollary 3.2]{BBD}
If $a>0, b>0,$
\begin{equation}\label{5}
4b>a^2, ~~\frac{2(a+\sqrt{4b-a^2})}{a\sqrt{4b-a^2}}\alpha+
\frac{4}{a\sqrt{4b-a^2}}\beta<1,
\end{equation}
where $\alpha$ and $\beta$ are defined in (a1) by (\ref{6}),
then the zero solution of the equation 
\begin{equation}\label{4}
\ddot{x}(t)+a\dot{x}(t)+bx(t)+\sum_{k=1}^l a_k(t)\dot{x}(g_k(t))
+\sum_{k=1}^m b_k(t)x(h_k(t))=0
\end{equation}
is globally exponentially stable.
\end{Lem}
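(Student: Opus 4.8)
\medskip
\noindent\textbf{Proof idea.}
The plan is to treat (\ref{4}) as the exponentially stable ordinary equation $\ddot x+a\dot x+bx=0$ perturbed by the delayed terms, and to run a fixed-point argument built on the \emph{explicit} Cauchy function of the unperturbed equation, measured in a norm weighted so that the two perturbation parameters $\alpha,\beta$ are kept separate. Since $4b>a^2$, I would first normalize by putting $\mu=a/2>0$ and $\nu=\tfrac12\sqrt{4b-a^2}>0$, so that $\ddot x+a\dot x+bx=0$ has Cauchy function
\[
X(t)=\frac{1}{\nu}\,e^{-\mu t}\sin(\nu t),\qquad t\ge 0,\qquad X(0)=0,\quad \dot X(0)=1 .
\]

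Next, writing (\ref{4}) as $\ddot x+a\dot x+bx=f[x](t)$ with $f[x](t):=-\sum_{k=1}^{l}a_k(t)\dot x(g_k(t))-\sum_{k=1}^{m}b_k(t)x(h_k(t))$, the variation-of-parameters formula gives, for $t\ge t_1$ with $t_1$ so large that $g_k(t),h_k(t)\ge t_0$ on $[t_1,\infty)$,
\[
x(t)=x_0(t)+\int_{t_1}^{t}X(t-s)\,f[x](s)\,ds,\qquad \dot x(t)=\dot x_0(t)+\int_{t_1}^{t}\dot X(t-s)\,f[x](s)\,ds,
\]
where $x_0$ absorbs the initial data and the (exponentially decaying) contribution of the delayed terms whose arguments lie below $t_1$, so $|x_0(t)|+|\dot x_0(t)|\le Ce^{-\mu t}$, and the second identity uses $X(0)=0$. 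The delayed part now enters only through two $L^1$ norms, which I would bound crudely but sufficiently: since $|\sin|\le 1$,
\[
\int_0^\infty|X(u)|\,du\le\frac1\nu\int_0^\infty e^{-\mu u}\,du=\frac{1}{\mu\nu}=\frac{4}{a\sqrt{4b-a^2}},
\]
and since $\dot X(u)=\tfrac1\nu e^{-\mu u}\bigl(\nu\cos\nu u-\mu\sin\nu u\bigr)$ with $|\nu\cos\nu u-\mu\sin\nu u|\le\mu+\nu$,
\[
\int_0^\infty|\dot X(u)|\,du\le\frac{\mu+\nu}{\mu\nu}=\frac{2\bigl(a+\sqrt{4b-a^2}\bigr)}{a\sqrt{4b-a^2}} ;
\]
thus the second inequality in (\ref{5}) is exactly $\alpha\int_0^\infty|\dot X|+\beta\int_0^\infty|X|<1$.

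To close, I would set up a contraction on a weighted space. Fix small $\varepsilon>0$ and $\gamma>0$, enlarge $t_1$ so that also $\sum_k|a_k(s)|\le\alpha+\varepsilon$ and $\sum_k|b_k(s)|\le\beta+\varepsilon$ on $[t_1,\infty)$, and use the Banach space of $C^1$ functions on $[t_1,\infty)$ equipped with
\[
\|x\|:=(\beta+\varepsilon)\sup_{t\ge t_1}e^{\gamma t}|x(t)|+(\alpha+\varepsilon)\sup_{t\ge t_1}e^{\gamma t}|\dot x(t)| .
\]
By the representation above, the solution of (\ref{4}) with its prescribed initial data is the unique fixed point of the affine map $x\mapsto x_0+\mathcal T x$, where $(\mathcal T x)(t):=\int_{t_1}^{t}X(t-s)\,f[x](s)\,ds$ collects the delayed terms whose arguments are $\ge t_1$; and the two $L^1$ bounds give
\[
\|\mathcal T x\|\le \kappa\Bigl[(\beta+\varepsilon)\!\int_0^\infty\!|X(u)|e^{\gamma u}\,du+(\alpha+\varepsilon)\!\int_0^\infty\!|\dot X(u)|e^{\gamma u}\,du\Bigr]\|x\| ,
\]
where $\kappa$ bounds the factors $e^{\gamma(t-g_k(s))},e^{\gamma(t-h_k(s))}$ coming from $e^{\gamma t}|\dot x(g_k(s))|=e^{\gamma(t-g_k(s))}\,e^{\gamma g_k(s)}|\dot x(g_k(s))|$ (so $\kappa=e^{\gamma\delta}$ when all delays are $\le\delta$). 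For $\varepsilon,\gamma$ small, the bracketed factor times $\kappa$ is $<1$ by (\ref{5}); hence $\mathcal T$ is a contraction, its fixed point lies in the weighted space, and therefore $|x(t)|+|\dot x(t)|=O(e^{-\gamma t})$ with constant linear in the initial data, i.e.\ global exponential stability.

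Two things carry the proof. The quantitative point is the choice of norm: weighting $|x|$ by $\beta+\varepsilon$ and $|\dot x|$ by $\alpha+\varepsilon$ makes $f[x]$ contribute $(\alpha+\varepsilon)\sup_t e^{\gamma t}|\dot x(t)|+(\beta+\varepsilon)\sup_t e^{\gamma t}|x(t)|$, matched exactly against $\int_0^\infty|\dot X(u)|e^{\gamma u}\,du$ and $\int_0^\infty|X(u)|e^{\gamma u}\,du$ respectively, which yields the sharp constants of (\ref{5}) instead of the cruder $(\alpha+\beta)\bigl(\int|X|+\int|\dot X|\bigr)<1$. The technical point, and the main obstacle, is the weighted estimate when the delays $t-g_k(t),t-h_k(t)$ need not be bounded, so that $\kappa$ is not obviously finite: with $\gamma=0$ one has $\kappa=1$ and obtains asymptotic stability under (a1)--(a2) alone (working in the space of $C^1$ functions with $x,\dot x\to 0$), while the genuinely exponential conclusion under (a1)--(a2) requires the Bohl--Perron-type reduction and approximation arguments of \cite{BBD}.
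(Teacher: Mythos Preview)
The paper does not prove this lemma; it is quoted as Corollary~3.2 of \cite{BBD} and used as a black box, so there is no in-paper proof to compare against.

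That said, your sketch is correct and is essentially the approach of \cite{BBD}: view (\ref{4}) as a perturbation of $\ddot x+a\dot x+bx=0$, use the explicit Cauchy function in the underdamped regime $4b>a^2$, and control the perturbation operator through the $L^1$ norms of $X$ and $\dot X$. Your identifications $\int_0^\infty|X|\le 4/(a\sqrt{4b-a^2})$ and $\int_0^\infty|\dot X|\le 2(a+\sqrt{4b-a^2})/(a\sqrt{4b-a^2})$ are precisely the constants in (\ref{5}), and the weighted-norm device that pairs $\beta$ with $\int|X|$ and $\alpha$ with $\int|\dot X|$ is the right way to avoid the cruder $(\alpha+\beta)(\int|X|+\int|\dot X|)<1$. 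You also correctly isolate the one real obstacle: hypothesis~(a2) does not force bounded delays, so the factor $\kappa=e^{\gamma\delta}$ is not directly available, and upgrading asymptotic stability to genuine exponential stability needs the Bohl--Perron machinery of \cite{BBD} rather than a bare contraction in the weighted space.
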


\begin{Lem}\label{lemma2}\cite{BBI}
Assume that the equation
\begin{equation}\label{19}
\ddot{x}(t)+f(t,x(t),\dot{x}(t))+s(t,x(t))+\sum_{k=1}^m s_k(t,x(t),x(h_k(t)))=0
\end{equation}
possesses a unique trivial equilibrium, where
$f(t,v,0)=0$, $s(t,0)=0$, $s_k(t,v,0)=0$, \\ $0<a_0\leq \frac{f(t,v,u)}{u}\leq A$,
$\displaystyle 0<b_0\leq \frac{s(t,u)}{u}\leq B$, 
$\displaystyle \left| \frac{s_k(t,v,u)}{u} \right| \leq C_k, u\neq 0,~t-h_k(t)\leq \tau$.

If at least one of the conditions \\
1) $\displaystyle B\leq \frac{a_0^2}{4},~ \sum_{k=1}^m C_k<b_0-\frac{a_0}{2}(A-a_0)$, ~~~~~~
2) $\displaystyle b_0\geq \frac{a_0}{2}\left(A-\frac{a_0}{2}\right),~ \sum_{k=1}^m C_k<\frac{a_0^2}{2}-B$\\
holds, then zero is a global  attractor for all solutions of equation~(\ref{19}).
\end{Lem}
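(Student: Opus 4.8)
The plan is to linearize equation~\eqref{19} along an arbitrary fixed solution, turning it into a linear nonautonomous second order equation whose coefficients still obey the one--sided bounds in the hypotheses, and then to control that solution by comparison with the constant--coefficient equation underlying Lemma~\ref{lemma1}. First I would fix a global solution $x$ of \eqref{19}--\eqref{2} and set
\[
p(t)=\frac{f(t,x(t),\dot x(t))}{\dot x(t)},\qquad q(t)=\frac{s(t,x(t))}{x(t)},\qquad r_k(t)=\frac{s_k(t,x(t),x(h_k(t)))}{x(h_k(t))},
\]
with the conventions $p(t):=a_0$ where $\dot x(t)=0$, $q(t):=b_0$ where $x(t)=0$, $r_k(t):=0$ where $x(h_k(t))=0$, legitimate because $f(t,v,0)=s(t,0)=s_k(t,v,0)=0$. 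Then $p,q,r_k$ are measurable, $a_0\le p(t)\le A$, $b_0\le q(t)\le B$, $|r_k(t)|\le C_k$, $t-h_k(t)\le\tau$, and $x$ solves the linear equation $\ddot x(t)+p(t)\dot x(t)+q(t)x(t)+\sum_{k=1}^m r_k(t)x(h_k(t))=0$. So it suffices to show that every solution of any linear equation of this type satisfies $x(t)\to0$ and $\dot x(t)\to0$ as $t\to\infty$.

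Now I would pick constants $a\in[a_0,A]$, $b>0$ (think $a\approx a_0$, $b\approx b_0$), rewrite the reduced equation as $\ddot x+a\dot x+bx=F(t)$ with $F(t)=-(p-a)\dot x-(q-b)x-\sum_k r_k x(h_k)$, so that $|F(t)|\le\alpha_0|\dot x(t)|+\beta_0\sup_{t-\tau\le\sigma\le t}|x(\sigma)|$ with $\alpha_0=A-a_0$, $\beta_0=(B-b_0)+\sum_kC_k$ for the choice $a=a_0$, $b=b_0$. Let $Z$ be the fundamental solution of $\ddot z+a\dot z+bz=0$ with $Z(0)=0$, $\dot Z(0)=1$; it decays exponentially since $a,b>0$. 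By variation of parameters $x=Z*F$ and $\dot x=\dot Z*F$ up to an exponentially decaying term carrying the data \eqref{2}; estimating the convolutions by $\|Z\|_{L^1}$ and $\|\dot Z\|_{L^1}$ produces a closed linear system for $\bigl(\sup|x|,\sup|\dot x|\bigr)$, resp.\ for $\bigl(\limsup|x|,\limsup|\dot x|\bigr)$, whose coefficient matrix has rank one with single nonzero eigenvalue $\|Z\|_{L^1}\beta_0+\|\dot Z\|_{L^1}\alpha_0$. Hence
\[
\|\dot Z\|_{L^1}\,\alpha_0+\|Z\|_{L^1}\,\beta_0<1\quad\Longrightarrow\quad x,\dot x\ \text{bounded and}\ x(t)\to0,\ \dot x(t)\to0 ,
\]
which is exactly the Bohl--Perron mechanism behind Lemma~\ref{lemma1}, applied to the linearized equation rather than to a genuinely linear one.

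It then remains to check that each hypothesis yields the displayed inequality for a suitable $(a,b)$, the two hypotheses matching the two regimes of $Z$. If $4b>a^2$ the roots are complex, $\|\dot Z\|_{L^1}\le\frac{2(a+\sqrt{4b-a^2})}{a\sqrt{4b-a^2}}$ and $\|Z\|_{L^1}\le\frac{4}{a\sqrt{4b-a^2}}$, so the inequality takes the shape of \eqref{5} and Lemma~\ref{lemma1} itself applies (giving even exponential stability); condition~2), which forces $b_0\ge a_0^2/4$, lives in this regime, with $\sum_kC_k<\tfrac{a_0^2}{2}-B$ the slack that, after the $L^1$--norm estimates, closes it. If $4b\le a^2$ (not covered by Lemma~\ref{lemma1}) the roots $\lambda_\pm=\tfrac12(-a\pm\sqrt{a^2-4b})$ are real and negative, $Z\ge0$ with the exact value $\|Z\|_{L^1}=1/b$ (integrate the ODE over $[0,\infty)$) and $\|\dot Z\|_{L^1}=2\max_{t\ge0}Z(t)$; condition~1) forces $b_0\le B\le a_0^2/4$ and, through $\sum_kC_k<b_0-\tfrac{a_0}{2}(A-a_0)$, also $A-a_0<2b_0/a_0$, placing us firmly in this overdamped regime, where those exact values drive the estimate.

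The main obstacle, I expect, is precisely this last matching: the reference constants must be chosen \emph{uniformly} over all solutions — the coefficients $p,q,r_k$ depend on $x$, so no per--solution optimization is allowed — and one must recover the \emph{sharp} thresholds $b_0-\tfrac{a_0}{2}(A-a_0)$ and $\tfrac{a_0^2}{2}-B$, not a cruder sufficient condition, which forces the use of (near--)exact $L^1$ norms of $Z,\dot Z$ (notably the overdamped identity $\|Z\|_{L^1}=1/b$, outside the scope of Lemma~\ref{lemma1}) and the right $(a,b)$ in each of the two cases. As an alternative completion I would keep in reserve a Lyapunov--Razumikhin argument with a functional like $V(t)=\tfrac12\bigl(\dot x(t)+\tfrac{a_0}{2}x(t)\bigr)^2+\int_0^{x(t)}s(t,u)\,du-\tfrac{a_0^2}{8}x(t)^2$ (whose $s(t,x)\dot x$ contributions cancel along \eqref{19}), controlling $|\sum_ks_k|\le\sum_kC_k|x(h_k(t))|$ by a Razumikhin estimate and passing from $\int^\infty(-\dot V)<\infty$ to $x,\dot x\to0$ via Barbalat's lemma; there the difficulty reappears as the sign analysis of $\dot V$ under the time--varying $p,q$, which again splits along $B\le a_0^2/4$ versus $b_0$ large.
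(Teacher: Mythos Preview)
The paper does not prove Lemma~\ref{lemma2}: it is quoted from \cite{BBI} as an auxiliary result, with no argument supplied here, so there is no in-paper proof to compare your proposal against. Your linearization along a fixed solution is precisely the device the authors themselves use in this paper to pass from linear to nonlinear statements (see the proofs of Theorem~\ref{th2} and Theorem~\ref{th3}), and your split into the overdamped regime $4b\le a^2$ (matched to hypothesis~1)) versus the underdamped regime $4b>a^2$ (matched to hypothesis~2)), together with the exact identity $\|Z\|_{L^1}=1/b$ in the former, is the right structural reading of why there are two conditions. You also correctly flag the real obstacle: with the naive choices $a=a_0$, $b=b_0$, $\alpha_0=A-a_0$, $\beta_0=(B-b_0)+\sum_kC_k$, the Bohl--Perron inequality $\|\dot Z\|_{L^1}\alpha_0+\|Z\|_{L^1}\beta_0<1$ does not collapse to the stated sharp thresholds $\sum_kC_k<b_0-\tfrac{a_0}{2}(A-a_0)$ or $\sum_kC_k<\tfrac{a_0^2}{2}-B$ without a further tight bound on $\|\dot Z\|_{L^1}=2Z_{\max}$ (or a cleverer choice of reference pair $(a,b)$), and at that point your write-up is a plan, not a proof. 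The Lyapunov--Razumikhin alternative you keep in reserve is a plausible route to those exact constants; whichever argument \cite{BBI} actually runs, its completion lives there rather than in the present paper.
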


We start with linear equations.
Stabilization results for linear systems were recently obtained in  \cite{Saberi1,Saberi2}.
Unlike \cite{Saberi1,Saberi2}, the following theorem considers models with variable delays, however,
the control is not delayed.

\begin{Th}\label{th1}\label{cor1}
For any  $\delta\in (0,2)$, $\alpha$ and $\beta$ defined by (\ref{6}) and
\begin{equation}\label{8}
\lambda > \mu(\lambda) := \frac{(\delta+\sqrt{4-\delta^2}) \alpha+\sqrt{(\delta+\sqrt{4-\delta^2})^2\alpha^2
+4\sqrt{4-\delta^2}\beta\delta}}{\delta \sqrt{4-\delta^2}}\, ,
\end{equation}
equation (\ref{1}) with the control $u(t)=-\delta \lambda \dot{x}(t)-\lambda^2 x(t)$
is exponentially stable.
\end{Th}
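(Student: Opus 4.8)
The plan is to apply Lemma~\ref{lemma1} directly, choosing the damping coefficient $a$ and the stiffness coefficient $b$ that are created by the control. Substituting $u(t)=-\delta\lambda\dot{x}(t)-\lambda^2 x(t)$ into \eqref{1} moves the control terms to the left-hand side, producing
\[
\ddot{x}(t)+\delta\lambda\,\dot{x}(t)+\lambda^2 x(t)+\sum_{k=1}^l a_k(t)\dot{x}(g_k(t))+\sum_{k=1}^m b_k(t)x(h_k(t))=0,
\]
which is exactly equation \eqref{4} with $a=\delta\lambda$ and $b=\lambda^2$. So it suffices to verify the two hypotheses of Lemma~\ref{lemma1} for this choice of $a,b$. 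The positivity $a>0$, $b>0$ is immediate since $\delta\in(0,2)$ and $\lambda>\mu(\lambda)\ge 0$.

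First I would check $4b>a^2$: here $4b-a^2=4\lambda^2-\delta^2\lambda^2=(4-\delta^2)\lambda^2>0$ because $\delta\in(0,2)$, and moreover $\sqrt{4b-a^2}=\lambda\sqrt{4-\delta^2}$. The second condition of \eqref{5} becomes, after substituting $a=\delta\lambda$ and $\sqrt{4b-a^2}=\lambda\sqrt{4-\delta^2}$,
\[
\frac{2\bigl(\delta\lambda+\lambda\sqrt{4-\delta^2}\bigr)}{\delta\lambda\cdot\lambda\sqrt{4-\delta^2}}\,\alpha
+\frac{4}{\delta\lambda\cdot\lambda\sqrt{4-\delta^2}}\,\beta<1,
\]
i.e. $\dfrac{2(\delta+\sqrt{4-\delta^2})}{\delta\sqrt{4-\delta^2}}\,\dfrac{\alpha}{\lambda}+\dfrac{4}{\delta\sqrt{4-\delta^2}}\,\dfrac{\beta}{\lambda^2}<1$. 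Multiplying through by $\lambda^2\delta\sqrt{4-\delta^2}>0$, this is equivalent to the quadratic inequality in $\lambda$
\[
\delta\sqrt{4-\delta^2}\,\lambda^2-2(\delta+\sqrt{4-\delta^2})\alpha\,\lambda-4\beta>0.
\]

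The remaining step is purely algebraic: solve this quadratic for $\lambda$. Its leading coefficient is positive and its constant term $-4\beta\le 0$, so it has a unique positive root, and the inequality holds precisely for $\lambda$ greater than that root. Applying the quadratic formula gives the root
\[
\lambda_{+}=\frac{2(\delta+\sqrt{4-\delta^2})\alpha+\sqrt{4(\delta+\sqrt{4-\delta^2})^2\alpha^2+16\,\delta\sqrt{4-\delta^2}\,\beta}}{2\,\delta\sqrt{4-\delta^2}},
\]
which simplifies (pulling a factor $2$ out of the discriminant) to exactly $\mu(\lambda)$ as defined in \eqref{8}. Hence $\lambda>\mu(\lambda)$ is equivalent to the second inequality in \eqref{5}, and Lemma~\ref{lemma1} yields global exponential stability, which in particular gives exponential stability. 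I do not anticipate a genuine obstacle here; the only mild subtlety is bookkeeping the equivalence between the displayed quadratic inequality and the closed-form threshold $\mu(\lambda)$, i.e. checking that the discriminant simplification is carried out correctly so that the stated formula matches.
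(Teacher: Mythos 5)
Your proposal is correct and follows exactly the same route as the paper's proof: substitute $a=\delta\lambda$, $b=\lambda^2$ into Lemma~\ref{lemma1}, note that $4b>a^2$ is automatic for $\delta\in(0,2)$, and reduce the second condition of (\ref{5}) to the quadratic inequality $\delta\sqrt{4-\delta^2}\,\lambda^2-2(\delta+\sqrt{4-\delta^2})\alpha\lambda-4\beta>0$, whose positive root is precisely $\mu$ in (\ref{8}). The only difference is that you carry out the quadratic-formula bookkeeping explicitly, which the paper leaves implicit.
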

\begin{proof}
Equation (\ref{1}) with the control
\begin{equation}\label{7a}
\ddot{x}(t)+\sum_{k=1}^l a_k(t)\dot{x}(g_k(t))
+\sum_{k=1}^m b_k(t)x(h_k(t))=-\delta \lambda\dot{x}(t)-\lambda^2 x(t)
\end{equation}
has the form of (\ref{4}) with $a=\delta\lambda$ and $b=\lambda^2$.
Then the inequalities in (\ref{5}) have the form
\begin{equation}\label{9}
4\lambda^2>\delta^2\lambda^2 \mbox{~~and~~}
\frac{2(\delta+\sqrt{4-\delta^2})}{\delta\lambda\sqrt{4-\delta^2}}\alpha+\frac{4}{\delta\lambda^2\sqrt{4-\delta^2}}\beta<1.
\end{equation}

The first inequality in (\ref{9}) holds as $\delta\in (0,2)$,
and the second one is equivalent to
\begin{equation}\label{10}
\delta\lambda^2\sqrt{4-\delta^2} -2 \left( \delta+\sqrt{4-\delta^2} \right) \alpha\lambda-4\beta>0.
\end{equation}
Condition  (\ref{8}) implies   (\ref{10}), which completes the proof.
\end{proof}

\begin{Cor}\label{cor2a}
Let $\displaystyle \mu(\delta_0)=\min_{\delta\in [\varepsilon,2-\varepsilon]} \mu(\delta)$ for some $\varepsilon>0$,
where $\mu(\delta)$ is defined in (\ref{8}).
Then for $\lambda > \mu(\delta_0)$ equation (\ref{1}) with the control  $u(t)=-\delta_0 \lambda\dot{x}(t)-\lambda^2 x(t)$
is exponentially stable.
\end{Cor}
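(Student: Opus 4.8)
The plan is to reduce Corollary~\ref{cor2a} directly to Theorem~\ref{th1} by observing that the minimization over $\delta$ does not alter the underlying mechanism; it merely selects one admissible value of the damping parameter. First I would note that $\mu(\delta)$ is a continuous function of $\delta$ on the compact interval $[\varepsilon, 2-\varepsilon]$ — indeed, the expression in (\ref{8}) is built from $\sqrt{4-\delta^2}$ (which is strictly positive and continuous on $[\varepsilon,2-\varepsilon]$) and elementary algebraic operations with the nonnegative constants $\alpha,\beta$, none of which introduce singularities on this interval. Hence the minimum is attained at some point $\delta_0 \in [\varepsilon, 2-\varepsilon] \subset (0,2)$, and $\mu(\delta_0)$ is a well-defined finite number.

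Having fixed this particular $\delta_0 \in (0,2)$, I would then simply invoke Theorem~\ref{th1} with $\delta = \delta_0$: for every $\lambda > \mu(\delta_0)$ the hypothesis $\lambda > \mu(\delta_0)$ of that theorem is satisfied, so equation (\ref{1}) with the control $u(t) = -\delta_0 \lambda \dot{x}(t) - \lambda^2 x(t)$ is exponentially stable. That is the entire content of the corollary. The only role played by the minimization is practical rather than logical: among all the thresholds $\mu(\delta)$ available from Theorem~\ref{th1}, choosing $\delta_0$ yields the smallest threshold on $\lambda$, i.e. the least control gain guaranteeing stabilization by this method; I would remark on this to motivate why one restricts attention to $\delta_0$.

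There is no real obstacle here — the statement is essentially a corollary in the most literal sense, an immediate specialization of the preceding theorem together with an elementary extreme-value argument. The only point requiring a word of care is \emph{why the restriction to $[\varepsilon, 2-\varepsilon]$ rather than the open interval $(0,2)$}: as $\delta \to 0^+$ the factor $\delta$ in the denominator of (\ref{8}) forces $\mu(\delta) \to \infty$ whenever $\alpha > 0$ or $\beta > 0$, and as $\delta \to 2^-$ the factor $\sqrt{4-\delta^2}$ in the denominator does the same, so the infimum over the full open interval is generally not attained. Compactifying the interval by the $\varepsilon$-cutoff is exactly what makes the minimum exist and the corollary rigorous; I would state this briefly so the reader sees that the $\varepsilon$ is not an artefact but a necessity. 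No computation beyond continuity of $\mu$ on a compact set is needed.
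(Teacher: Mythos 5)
Your proposal is correct and matches the paper's (implicit) argument: the corollary is stated without proof precisely because it is an immediate specialization of Theorem~\ref{th1} to the minimizing value $\delta_0\in[\varepsilon,2-\varepsilon]\subset(0,2)$. Your added remarks on continuity of $\mu$ on the compact interval and on the blow-up of $\mu(\delta)$ as $\delta\to 0^+$ or $\delta\to 2^-$ (when $\alpha>0$ or $\beta>0$) are accurate and explain the role of $\varepsilon$, but they are supplementary rather than a different route.
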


For $\delta=\sqrt{2}$
Theorem~\ref{th1} yields the following result.
\begin{Cor}\label{cor2}
Eq. (\ref{1}) with the control
$u(t)=-\sqrt{2}\lambda \dot{x}(t)-\lambda^2 x(t)$
is exponentially stable if
\begin{equation}\label{12a}
\lambda>\sqrt{2}(\alpha+\sqrt{\alpha^2+\beta}).
\end{equation}
\end{Cor}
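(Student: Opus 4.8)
The plan is to derive Corollary~\ref{cor2} as the special case $\delta=\sqrt{2}$ of Theorem~\ref{th1}, so essentially no new analytic work is required; the only task is to simplify the expression $\mu(\delta)$ at this particular value of the parameter. First I would substitute $\delta=\sqrt{2}$ into the defining formula (\ref{8}) for $\mu(\lambda)$, noting that $4-\delta^2 = 2$ and hence $\sqrt{4-\delta^2}=\sqrt{2}$, so that $\delta+\sqrt{4-\delta^2}=2\sqrt{2}$ and $\delta\sqrt{4-\delta^2}=2$.

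Next I would plug these into the numerator and denominator of (\ref{8}). The denominator becomes $\delta\sqrt{4-\delta^2}=2$. In the numerator, $(\delta+\sqrt{4-\delta^2})^2\alpha^2 = 8\alpha^2$ and $4\sqrt{4-\delta^2}\beta\delta = 4\sqrt{2}\cdot\sqrt{2}\,\beta = 8\beta$, so the term under the square root is $8\alpha^2+8\beta = 8(\alpha^2+\beta)$, whose square root is $2\sqrt{2}\sqrt{\alpha^2+\beta}$. The leading term of the numerator is $(\delta+\sqrt{4-\delta^2})\alpha = 2\sqrt{2}\,\alpha$. Therefore $\mu(\sqrt{2}) = \frac{2\sqrt{2}\,\alpha + 2\sqrt{2}\sqrt{\alpha^2+\beta}}{2} = \sqrt{2}\bigl(\alpha+\sqrt{\alpha^2+\beta}\bigr)$, which is exactly the right-hand side of (\ref{12a}).

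Finally I would invoke Theorem~\ref{th1} with $\delta=\sqrt{2}\in(0,2)$: the hypothesis $\lambda>\mu(\lambda)=\mu(\sqrt{2})$ is precisely condition (\ref{12a}), and the control $u(t)=-\delta\lambda\dot{x}(t)-\lambda^2 x(t)$ specializes to $u(t)=-\sqrt{2}\,\lambda\dot{x}(t)-\lambda^2 x(t)$, so exponential stability of equation (\ref{1}) follows immediately. There is no real obstacle here — the statement is a direct corollary — so the ``hard part'' is merely bookkeeping: making sure the arithmetic simplification of the nested radical is carried out correctly, in particular that the two contributions $8\alpha^2$ and $8\beta$ combine cleanly under the common factor $8$. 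One might optionally remark that $\sqrt{2}$ is a natural choice since it is the midpoint symmetry-type value of $\delta$ on $(0,2)$, though verifying that it actually minimizes $\mu(\delta)$ is not needed for the corollary and would instead belong to the discussion around Corollary~\ref{cor2a}.
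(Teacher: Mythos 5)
Your proposal is correct and is exactly the paper's route: the paper obtains Corollary~\ref{cor2} simply by setting $\delta=\sqrt{2}$ in Theorem~\ref{th1}, and your arithmetic simplification of $\mu(\sqrt{2})$ to $\sqrt{2}\bigl(\alpha+\sqrt{\alpha^2+\beta}\bigr)$ checks out. The only difference is that you spell out the radical bookkeeping that the paper leaves implicit.
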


\begin{Rem} For any equation (\ref{1}) there exists $\lambda>0$ such that condition (\ref{12a}) holds.
Hence the stabilizing damping control exists for any equation of form (\ref{1}).
\end{Rem}
\begin{Ex}\label{ex1} For the equation
\begin{equation}\label{11}
\ddot{x}(t)+ (\sin t) \dot{x}(g(t))+ (\cos t) x(h(t))=0, ~~h(t)\leq t, ~ g(t)\leq t,
\end{equation}
the upper bounds defined in (\ref{6}) are $\alpha=\beta=1$.
Hence, as long as $\lambda>2+\sqrt{2}$ in Corollary~\ref{cor2}, equation (\ref{11}) with the  control
$u(t)=-\sqrt{2}\lambda \dot{x}(t)-\lambda^2 x(t)$
is exponentially stable.
\end{Ex}
Let us proceed to nonlinear equation (\ref{3}); its stabilization is the main object of the present paper.
For simplicity we consider here nonlinear equations with the zero equilibrium, since the change of the variable
$z=x-x^{\ast}$  transforms an equation with the equilibrium $x^*$ into an equation in $z$ with the zero
equilibrium.

\begin{Th}\label{th2}
Suppose $f_k(t,0)=s_k(t,0)=0$,
\begin{equation}\label{13a}
\displaystyle \left|\frac{f_k(t,u)}{u}\right|\leq a_k(t),~\left|\frac{s_k(t,u)}{u}\right|\leq b_k(t), u\neq 0.
\end{equation}
Then for any $\delta\in (0,2)$, the zero equilibrium of (\ref{3})
with the control $u(t)=-\delta\lambda\dot{x}(t)-\lambda^2 x(t)$
\begin{equation}\label{12}
\ddot{x}(t)+\sum_{k=1}^l f_k(t,\dot{x}(g_k(t)))+\sum_{k=1}^m s_k(t,x(h_k(t)))=-\delta\lambda\dot{x}(t)-\lambda^2 x(t)
\end{equation}
is globally asymptotically stable, provided (\ref{8}) holds with $\alpha$ and $\beta$ defined in (\ref{6}).
\end{Th}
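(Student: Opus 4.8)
The plan is to reduce the nonlinear controlled equation (\ref{12}) to an application of the nonlinear attractivity result, Lemma~\ref{lemma2}, in exactly the way Theorem~\ref{th1} reduced the linear case to Lemma~\ref{lemma1}. First I would rewrite (\ref{12}) by moving the control to the left-hand side, obtaining
\[
\ddot{x}(t)+\delta\lambda\dot{x}(t)+\lambda^2 x(t)+\sum_{k=1}^l f_k(t,\dot{x}(g_k(t)))+\sum_{k=1}^m s_k(t,x(h_k(t)))=0,
\]
so that the dominant linear damping term $\delta\lambda\dot{x}(t)$ and the restoring term $\lambda^2 x(t)$ play the roles of the terms governed by $a_0,A$ and $b_0,B$ in Lemma~\ref{lemma2}. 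Concretely I would set $f(t,x,\dot x)=\delta\lambda\dot x$, so $a_0=A=\delta\lambda$, and $s(t,x)=\lambda^2 x$, so $b_0=B=\lambda^2$; the delayed perturbation terms are the $f_k$ and $s_k$, whose quotients are bounded by $a_k(t)$ and $b_k(t)$ respectively by hypothesis (\ref{13a}).

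The key technical point is that Lemma~\ref{lemma2} as stated has \emph{constant} bounds $C_k$ on the delayed terms, whereas here the bounds $a_k(t),b_k(t)$ are only time-varying functions with $\limsup$s equal to $\alpha$ and $\beta$. So the second step is to pass from the pointwise bounds (\ref{13a}) to effective asymptotic bounds: since only the eventual behaviour matters for global attractivity, one works on $[T,\infty)$ for large $T$ and uses that $\sum_k a_k(t)\le\alpha+\varepsilon$ and $\sum_k b_k(t)\le\beta+\varepsilon$ eventually. (Alternatively, one invokes the observation that Lemmas~\ref{lemma1} and~\ref{lemma2} come from the same source \cite{BBD,BBI} and the $\limsup$-type hypotheses already suffice there; I would state this explicitly, mirroring how (a1) and (\ref{6}) are used throughout.) Also, since $f(t,v,0)=0$, $s(t,0)=0$, and $f_k(t,0)=s_k(t,0)=0$ by assumption, together with the standing hypothesis that the uncontrolled and controlled equations have a unique global solution, the structural hypotheses of Lemma~\ref{lemma2} are met, and the trivial equilibrium is the relevant one.

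The third step is to verify that condition (\ref{8}) on $\lambda$ implies one of the two alternative inequalities of Lemma~\ref{lemma2} with the chosen parameters. With $a_0=A=\delta\lambda$, alternative 2) reduces to $b_0=\lambda^2\ge \frac{\delta\lambda}{2}\left(\delta\lambda-\frac{\delta\lambda}{2}\right)=\frac{\delta^2\lambda^2}{4}$, which holds iff $\delta\le 2$, i.e. for all $\delta\in(0,2)$; and its second requirement becomes $\sum_k C_k<\frac{\delta^2\lambda^2}{2}-\lambda^2$ — here I have to be careful, because this is negative when $\delta^2<2$. So in fact I would use alternative 1): $B=\lambda^2\le\frac{a_0^2}{4}=\frac{\delta^2\lambda^2}{4}$ requires $\delta\ge 2$, which fails; hence the right move is to observe that because $A=a_0$ the term $\frac{a_0}{2}(A-a_0)=0$ in 1), so 1) becomes $B\le\frac{a_0^2}{4}$ and $\sum C_k<b_0$ — still needing $\delta\ge2$. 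The honest resolution, and the place I expect the real work to be, is that with $A=a_0$ \emph{both} alternatives collapse, and one must instead track the precise chain of inequalities used in the proof of Theorem~\ref{th1}: condition (\ref{8}) was shown there to be equivalent to (\ref{10}), i.e.
\[
\delta\lambda^2\sqrt{4-\delta^2}-2(\delta+\sqrt{4-\delta^2})\alpha\lambda-4\beta>0,
\]
which is exactly the hypothesis (\ref{5}) of Lemma~\ref{lemma1} with $a=\delta\lambda,b=\lambda^2$ and the roles of $\alpha,\beta$ taken by $\sum a_k(t),\sum b_k(t)$. The cleanest route — and the one I would actually carry out — is therefore to appeal directly to the nonlinear counterpart of Lemma~\ref{lemma1} rather than to Lemma~\ref{lemma2}: apply the pointwise bounds (\ref{13a}) to dominate $|f_k|,|s_k|$ by $a_k(t)|\dot x(g_k(t))|$ and $b_k(t)|x(h_k(t))|$, invoke the variation-of-parameters / fundamental-solution estimate for the exponentially stable linear part $\ddot x+\delta\lambda\dot x+\lambda^2x$ underlying Lemma~\ref{lemma1} (whose characteristic roots have real parts bounded away from zero since $4\lambda^2>\delta^2\lambda^2$), and conclude by a Bohl--Perron / Halanay-type argument that the smallness condition (\ref{10}) forces every solution to $0$. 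The main obstacle is precisely matching the nonlinear comparison argument to the exact form of the smallness condition (\ref{8})=(\ref{10}); once that bookkeeping is done, global asymptotic stability of the zero equilibrium follows.
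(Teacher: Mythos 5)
There is a genuine gap: you correctly diagnose that Lemma~\ref{lemma2} cannot absorb the control terms (with $A=a_0=\delta\lambda$, $B=b_0=\lambda^2$ both alternatives force $\delta\geq 2$ or a negative right-hand side; moreover equation (\ref{19}) has no delayed-derivative perturbations at all, so the $f_k(t,\dot{x}(g_k(t)))$ terms do not even fit its format), but the route you fall back on --- reproving an exponential estimate for the nonlinear equation via variation of parameters and a Bohl--Perron/Halanay argument matched to (\ref{10}) --- is exactly the hard analysis that is never carried out. You say yourself that ``the main obstacle is precisely matching the nonlinear comparison argument to the exact form of the smallness condition,'' and that obstacle is the entire content of the theorem; as written, the proposal is a plan to redo Lemma~\ref{lemma1} in a nonlinear setting, not a proof.

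The missing idea is much simpler: quasi-linearization along a fixed solution. Let $x$ be any solution of (\ref{12}) and define
\[
\tilde{a}_k(t)=\begin{cases}\dfrac{f_k(t,\dot{x}(g_k(t)))}{\dot{x}(g_k(t))}, & \dot{x}(g_k(t))\neq 0,\\[2mm] 0, & \dot{x}(g_k(t))=0,\end{cases}
\qquad
\tilde{b}_k(t)=\begin{cases}\dfrac{s_k(t,x(h_k(t)))}{x(h_k(t))}, & x(h_k(t))\neq 0,\\[2mm] 0, & x(h_k(t))=0.\end{cases}
\]
Then $f_k(t,\dot{x}(g_k(t)))=\tilde{a}_k(t)\dot{x}(g_k(t))$ and $s_k(t,x(h_k(t)))=\tilde{b}_k(t)x(h_k(t))$, so this particular $x$ is a solution of a \emph{linear} equation of the form (\ref{7a}) with coefficients $\tilde{a}_k,\tilde{b}_k$. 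By hypothesis (\ref{13a}) these satisfy $|\tilde{a}_k(t)|\leq a_k(t)$, $|\tilde{b}_k(t)|\leq b_k(t)$, hence the associated limits superior are at most the $\alpha,\beta$ of (\ref{6}), and condition (\ref{8}) lets you invoke Theorem~\ref{th1} verbatim: the linear equation is exponentially stable, so all of its solutions, in particular $x$, tend to zero. Since $x$ was arbitrary, the zero equilibrium of (\ref{12}) is globally attractive. This three-line reduction to the already-proved linear case is what the paper does (modulo a typographical slip there of evaluating the quotients at $t$ rather than at $g_k(t)$, $h_k(t)$), and it completely avoids any new fixed-point or comparison machinery.
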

\begin{proof}
Suppose $x$ is a fixed solution of equation ~(\ref{12}).
Equation~(\ref{12}) can be rewritten as
$$
\ddot{x}(t)+\sum_{k=1}^l a_k(t)\dot{x}(g_k(t))
+\sum_{k=1}^m b_k(t)x(h_k(t))=-\delta\lambda\dot{x}(t)-\lambda^2 x(t),
$$
where
$\displaystyle
a_k(t)= \left\{\begin{array}{cc}
\frac{f_k(t,\dot{x}(t))}{\dot{x}(t)},& \dot{x}(t)\neq 0,\\
0,& \dot{x}(t)=0,\end{array}\right. ~~
b_k(t)=\left\{\begin{array}{cc}
\frac{s_k(t,x(t))}{x(t)},& x(t)\neq 0,\\
0,& x(t)=0. \end{array}\right.
$
~~~Hence the function $x$ is a solution of the linear equation
\begin{equation}\label{13}
\ddot{y}(t)+\sum_{k=1}^l a_k(t)\dot{y}(g_k(t))
+\sum_{k=1}^m b_k(t)y(h_k(t))=-\delta\lambda\dot{y}(t)-\lambda^2 y(t),
\end{equation}
which is exponentially stable by  Theorem~\ref{th1}.
Thus $\lim\limits_{t\rightarrow\infty}y(t)=0$ for any solution
$y$ of equation (\ref{13}), and since $x$ is a solution of (\ref{13}),
$\lim\limits_{t\rightarrow\infty}x(t)=0.$
\end{proof}

In particular, for $\delta=\sqrt{2}$ condition (\ref{8}) transforms into (\ref{12a}).

\begin{Ex}\label{ex2} Consider the equation
\begin{equation}\label{14}
\ddot{x}(t)+a(t) \dot{x}(g(t))+ b(t)\sin (x(h(t)))=0, ~~h(t)\leq t, ~g(t)\leq t,
\end{equation}
with $|a(t)|\leq \alpha, |b(t)|\leq \beta$.
Equation (\ref{14}) generalizes the sunflower equation introduced by Israelson and Johnson in \cite{isa} as a model
for the geotropic circumnutations of {\em Helianthus annuus}; later it was studied in \cite{alf,liza,somolinos}.

We have $\displaystyle \left|\frac{\sin u}{u}\right|\leq 1, ~u\neq 0$; hence if condition (\ref{12a}) holds
for $\alpha=\limsup\limits_{t\rightarrow\infty} |a(t)|$ and $\beta=\limsup\limits_{t\rightarrow\infty} |b(t)|$,
then the zero equilibrium of equation (\ref{14}) with the control
$u(t)=-\sqrt{2}\lambda \dot{x}(t)-\lambda^2 x(t)$ in the right-hand side
is globally exponentially stable.
Equation (\ref{14}) has an infinite number of equilibrium points
$x^{\ast}=\pi k$, $k=0,1,\dots$. To stabilize a fixed equilibrium $x^{\ast}=\pi k$
we apply the controller $u(t)=-\sqrt{2}\lambda \dot{x}(t)-\lambda^2 (x(t)-x^{\ast})$.

For example, consider the sunflower equation
$$
\ddot{x}(t)+ \dot{x}(t)+ 2 \sin (x(t-\pi))=0
$$
with various initial conditions $x(0)=6, 3, 0.1$, where $x(t)$ is constant for $t\leq 0$,
$x'(0)=1$ which has chaotic solutions (see Fig.~\ref{figure2}, left).
Application of the controller $u(t)=-\lambda \delta\dot{x}(t)- \lambda^2  [x(t)-\pi]$,
where $\delta=\sqrt{2}$ and $\lambda > \sqrt{2}+\sqrt{6}$, for example, $\lambda=4$,
stabilizes the otherwise unstable equilibrium $x^{\ast}=\pi$, as illustrated in Fig.~\ref{figure2}, right.

\begin{figure}[ht]
\centering
\vspace{-15mm}
\includegraphics[scale=0.38]{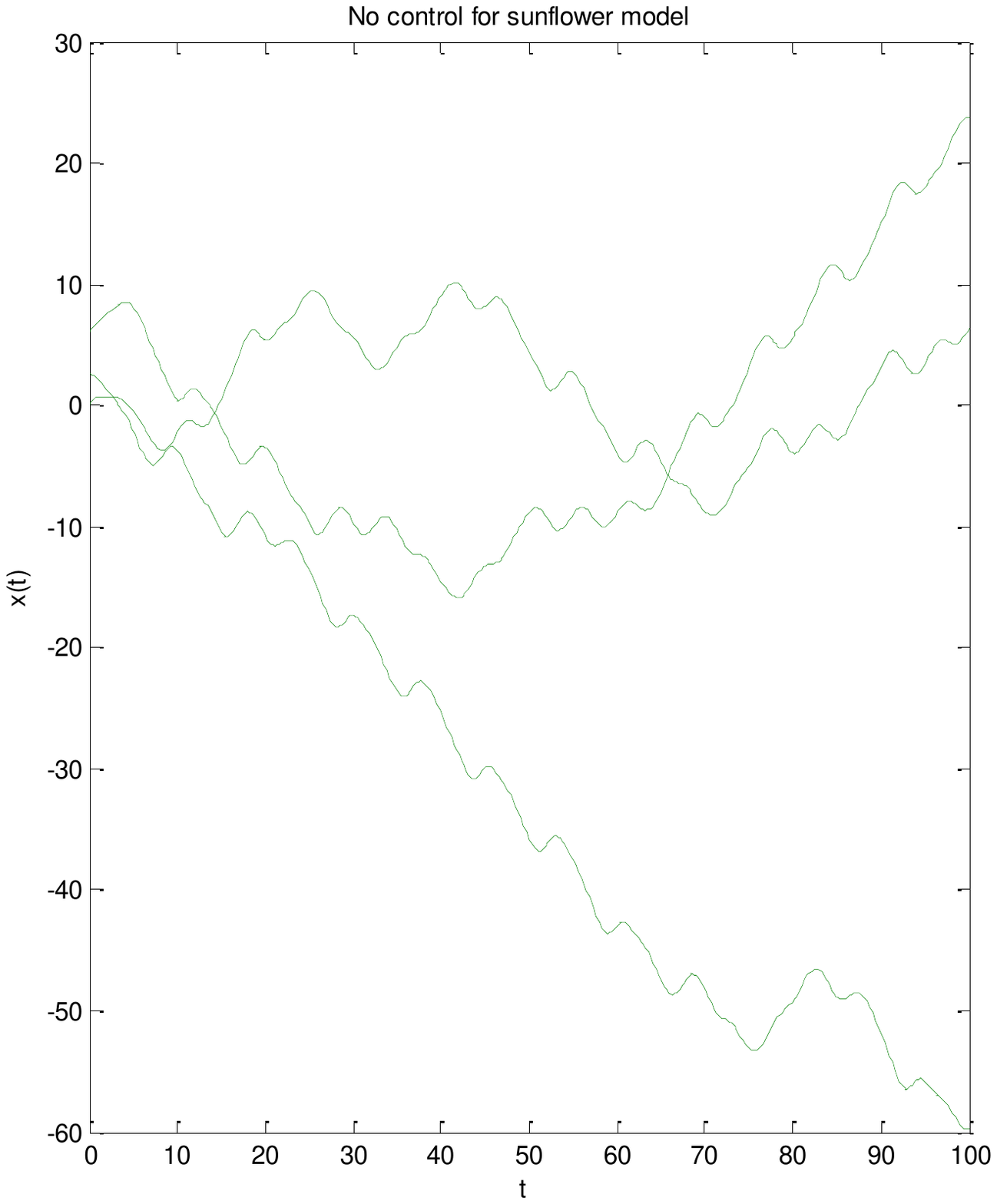} \hspace{-6mm}
\includegraphics[scale=0.38]{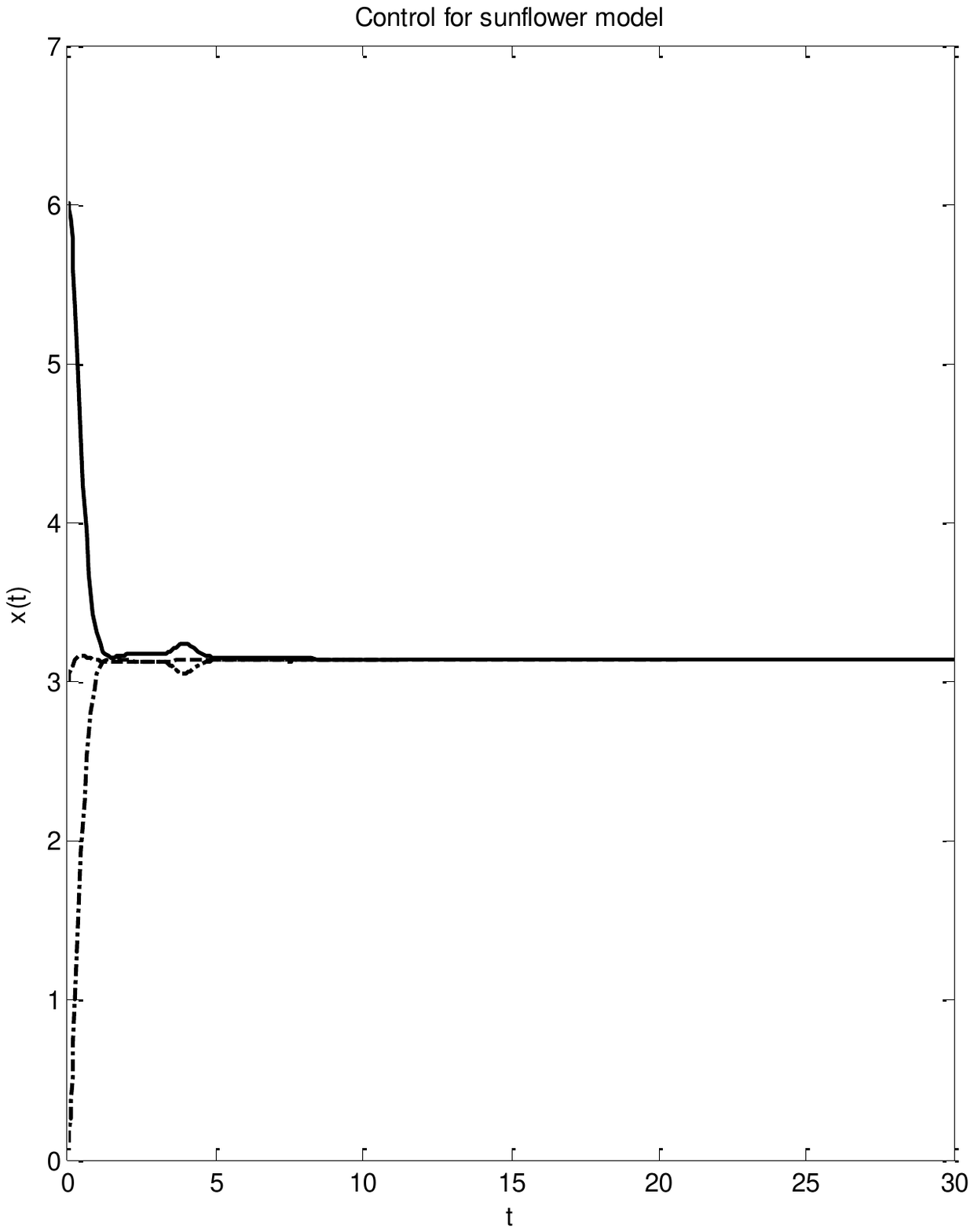}
\vspace{-28mm}
\caption{Stabilization of the equilibrium of the sunflower equation
$\ddot{x}(t)+\dot{x}+2 \sin(x(t-\pi))=0$ with various initial conditions.
The left graph illustrates unstable (chaotic) solutions while in the right graph,
corresponding to the sunflower model  with the control $u(t)=-\lambda \delta\dot{x}(t)- \lambda^2  [x(t)-\pi]$,
all three solutions of the controlled equation converge to the equilibrium $\pi$.
}
\label{figure2}
\end{figure}
\end{Ex}

\section{Classical proportional control}

In this section we investigate stabilization with the standard proportional delayed control.

Consider the equation
\begin{equation}\label{1a}
\ddot{x}(t)+a\dot{x}(t)+bx(h(t))=f(t,x(g(t)))
\end{equation}
which has an  equilibrium $x^{\ast}$.
The equation
\begin{equation}
\label{1ab}
\ddot{x}(t)+a\dot{x}(t)+bx(h(t))=f(t,x(g(t)))+u(t)
\end{equation}
with the control $u(t)=-b[x(t)-x(h(t))]$ has 
%$\ddot{x}(t)+a\dot{x}(t)+bx(h(t))=f(t,x(g(t)))-bx(t)+ bx(h(t))$.
the same equilibrium as (\ref{1ab}) 
and can be rewritten as 
\begin{equation}\label{20}
\ddot{x}(t)+a\dot{x}(t)+bx(t)=f(t,x(g(t))).
\end{equation}
After the substitution
$x=y+x^{\ast}$ into equation (\ref{20}) we obtain
\begin{equation}\label{21}
\ddot{y}(t)+a\dot{y}(t)+by(t)=p(t,y(g(t))),
\end{equation}
with $p(t,v)=f(t,v+x^{\ast})-bx^{\ast}$, where (\ref{21}) has the zero equilibrium.

\begin{Th}\label{th3}
Suppose $\left|f(t,v+x^{\ast})-bx^{\ast} \right|\leq C |v|$ for any $t$ and at least one of the following conditions
\\
a) $C<b\leq a^2/4$; ~~
b) $a^2/4\leq b <a^2/2-C$; ~~
c) $C< a\sqrt{4b-a^2}/4$
\\
holds.
Then the equilibrium $x^{\ast}$ of equation (\ref{1a})
 with the control $u(t)=-b[x(t)-x(h(t))]$ is globally
asymptotically stable.
\end{Th}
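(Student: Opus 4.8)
The plan is to exploit the reduction carried out just before the statement: the control $u(t)=-b[x(t)-x(h(t))]$ turns (\ref{1ab}) into (\ref{20}), and the substitution $x=y+x^{\ast}$ turns (\ref{20}) into (\ref{21}), i.e. $\ddot y(t)+a\dot y(t)+by(t)=p(t,y(g(t)))$ with $p(t,v)=f(t,v+x^{\ast})-bx^{\ast}$. Since $x^{\ast}$ is an equilibrium of (\ref{1a}) we have $f(t,x^{\ast})=bx^{\ast}$ for all $t$, hence $p(t,0)=0$, and the hypothesis of the theorem reads $|p(t,v)|\le C|v|$. Thus it suffices to show that the zero solution of (\ref{21}) is globally asymptotically stable, and I would do this by splitting into the cases $4b\le a^2$ (covered by a), b)) and $4b>a^2$ (covered by c)), applying Lemma~\ref{lemma2} in the first and Lemma~\ref{lemma1} in the second.

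For cases a) and b), rewrite (\ref{21}) as $\ddot y(t)+a\dot y(t)+by(t)-p(t,y(g(t)))=0$ and match it with equation (\ref{19}) of Lemma~\ref{lemma2}, taking $f(t,v,w)=aw$, $s(t,w)=bw$, and $s_1(t,v,w)=-p(t,w)$ (so $h_1=g$). Then $a_0=A=a$, $b_0=B=b$, $C_1=C$; moreover a constant equilibrium $y^{\ast}$ of (\ref{21}) would satisfy $|by^{\ast}|=|p(t,y^{\ast})|\le C|y^{\ast}|$, and since $C<b$ under either a) or b) (in case b), $C<\frac{a^2}{2}-b\le\frac{a^2}{4}\le b$ because $b\ge\frac{a^2}{4}$), this forces $y^{\ast}=0$, so (\ref{21}) has the unique trivial equilibrium required by the lemma. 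Now condition a) is exactly condition 1) of Lemma~\ref{lemma2} ($B\le a_0^2/4$ becomes $b\le a^2/4$; since $A=a_0$, the inequality $\sum C_k<b_0-\frac{a_0}{2}(A-a_0)$ collapses to $C<b$), and condition b) is exactly condition 2) ($b_0\ge\frac{a_0}{2}(A-\frac{a_0}{2})$ becomes $b\ge a^2/4$, and $\sum C_k<\frac{a_0^2}{2}-B$ becomes $C<\frac{a^2}{2}-b$). Hence Lemma~\ref{lemma2} applies and zero is a global attractor of (\ref{21}).

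For case c) I would use the same linearization-along-a-fixed-solution device as in the proof of Theorem~\ref{th2}. Let $y$ be an arbitrary solution of (\ref{21}) and put $\tilde b(t)=p(t,y(g(t)))/y(g(t))$ where $y(g(t))\neq 0$ and $\tilde b(t)=0$ otherwise; then $|\tilde b(t)|\le C$, and because $p(t,0)=0$ one has $\tilde b(t)y(g(t))=p(t,y(g(t)))$ identically, so $y$ solves the linear equation $\ddot y(t)+a\dot y(t)+by(t)-\tilde b(t)y(g(t))=0$. This is of the form (\ref{4}) with no delayed-derivative terms and a single delayed-state coefficient $b_1(t)=-\tilde b(t)$, $h_1=g$, so in the notation of (\ref{6}) we have $\alpha=0$ and $\beta=\limsup_{t\to\infty}|\tilde b(t)|\le C$. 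Condition c) presupposes $4b>a^2$ and gives $\frac{4C}{a\sqrt{4b-a^2}}<1$, whence $\frac{4\beta}{a\sqrt{4b-a^2}}<1$; together with $a,b>0$ these are the hypotheses (\ref{5}) of Lemma~\ref{lemma1}, so the linear equation is globally exponentially stable and $y(t)\to 0$. As $y$ was arbitrary, zero attracts all solutions of (\ref{21}), and undoing the substitution $x=y+x^{\ast}$ yields the claim.

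I expect the only real work to be the bookkeeping: seeing that a), b) collapse onto conditions 1), 2) of Lemma~\ref{lemma2} — the key observation being $A=a_0$, which kills the term $\frac{a_0}{2}(A-a_0)$ — and checking that the frozen-coefficient equation in case c) genuinely has the form (\ref{4}) with $\alpha=0$. Two points deserve a remark: Lemma~\ref{lemma2} requires a bounded delay $t-g(t)\le\tau$, so that boundedness must be available (assumed throughout this section or added as a hypothesis); and, exactly as in Theorem~\ref{th2}, the global attractivity obtained above, together with the uniform estimates underlying Lemmas~\ref{lemma1} and~\ref{lemma2}, is what upgrades the conclusion to full global asymptotic stability.
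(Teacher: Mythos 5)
Your proposal is correct and follows essentially the same route as the paper: parts a) and b) are read off as direct instances of conditions 1) and 2) of Lemma~\ref{lemma2} (with $a_0=A=a$, $b_0=B=b$, $C_1=C$), and part c) uses the same linearization-along-a-fixed-solution trick with Lemma~\ref{lemma1} applied at $\alpha=0$, $\beta=C$. You actually supply two details the paper glosses over — the verification that (\ref{21}) has a unique trivial equilibrium (a hypothesis of Lemma~\ref{lemma2}) and the correct evaluation of the frozen coefficient at $y(g(t))$ rather than $y(t)$ — so no changes are needed.
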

\begin{proof}
Statements a) and b) of Theorem~\ref{th3} are  direct corollaries of Lemma~\ref{lemma2}.
To prove Part c) suppose that $x$ is a solution of equation (\ref{21}). Equation (\ref{21}) can be rewritten in the form
\begin{equation}\label{21a}
\ddot{x}(t)+a\dot{x}(t)+bx(t)=P(t)x(g(t)),
\end{equation}
where ~
$\displaystyle
P(t)=\left\{\begin{array}{cc}
\frac{p(t,x(t))}{x(t)},& x(t)\neq 0,\\
0,& x(t)=0. \end{array}\right.
$~~
Hence the function $x$ is a solution of the linear equation
\begin{equation}\label{21b}
\ddot{y}(t)+a\dot{y}(t)+by(t)=P(t)y(g(t)).
\end{equation}
If $\alpha=0, \beta =C$, then condition c) of the theorem coincides with condition
(\ref{5}) of Lemma~\ref{lemma1}. Hence by Lemma~\ref{lemma1} equation (\ref{21b})
is exponentially stable, i.e.
for any solution $y$ of this equation
we have $\limsup\limits_{t\rightarrow\infty} y(t)=0$. Hence for a fixed solution $x$
of equation (\ref{21a})  we also have  $\limsup\limits_{t\rightarrow\infty} x(t)=0$.
\end{proof}

Let us examine a popular model
\begin{equation}\label{21abcd}
\ddot{x}(t)+a\dot{x}(t)+bx(h(t))=F(x(g(t)),
\end{equation}
where $F$ is either monotone or non-monotone feedback.
Its applications include the
neuromuscular regulation of movement and posture,
acousto-optical bistability, metal cutting, the cascade
control of fluid level devices and the electronically
clamped pupil light \cite{Cam}.

\begin{Ex}
\label{ex3}
Consider the special case of (\ref{21abcd}) 
\begin{equation}\label{1A}
\ddot{x}(t)+a\dot{x}(t)+bx(h(t))=\frac{d(t)|x(g(t))|^{m+1}}{1+|x(g(t))|^n},
\end{equation}
where $ 0\leq m<n, |d(t)|\leq d_0.$
Denote
~~ $\displaystyle
\mu=\sup_{v\geq 0}\frac{v^{m}}{1+v^n}=\left\{\begin{array}{ll}
1,& m=0,\\
\displaystyle \frac{m^{m/n}}{n(n-m)^{m/n-1}},& m>0.
\end{array}\right.
$ \\
If the conditions of Theorem~\ref{th3} hold with $C=\mu d_0$ then the zero equilibrium of
equation (\ref{1A})  with the control $u(t)=-b[x(t)-x(h(t))]$ is globally asymptotically  stable.
\end{Ex}

\begin{Ex}
\label{additional_example}
Consider the particular case of (\ref{21abcd})
\begin{equation}\label{1abc}
\ddot{x}(t)+ 2 \dot{x}(t)+  x(h(t))=\frac{0.8 x(g(t))}{1+x^{n}(g(t))},
\end{equation}
where $n \geq 6$. As can be easily verified, the range of the function $f(x)=1.6 x/(1+x^n)$ includes $[-1,1]$ for $n \geq 6$.
Let us demonstrate that for a certain choice of $h(t)$ and $g(t)$ the function $x(t) =\sin(t/4)$
is a solution. We restrict ourselves to the interval $[0,8\pi]$, and then extend it in such a way that
both $t-h(t)$ and $t-g(t)$ are periodic with a period $8\pi$. We can find $h(t)\in [0,t]$ such that
$\sin(h(t)/4)= \frac{1}{16} \sin(t/4)$, since $\sin(0)=0$, and the continuous function takes all its values between zero and $\sin(t/4)$.
As mentioned above, the function $y=1.6 u/(1+u^{n})$ takes all the values $y\in [-1,1]$ for $u\in [-1,1]$,
and $\cos(x/4)$ takes all the values between -1 and 1 for $x\in [-4\pi, t]$, there is $g(t)$ such that
$\displaystyle \frac{1}{2} \cos \left( \frac{t}{4} \right)=\frac{0.8 \sin(g(t)/4)}{1+\sin^{n}(g(t)/4)}$ and $g(t) \in [-4\pi, t]$.
Then $x(t) =\sin(t/4)$ is a solution of (\ref{1abc}) on $[0,4\pi]$, with the same initial function on $[-4\pi,0]$.
Further, we extend $h(t+8\pi)=h(t)+8\pi$, $g(t+8\pi)=g(t)+8\pi$ and obtain that $x(t) =\sin(t/4)$ is a solution of (\ref{1abc}),
$t \geq 0$, with $\varphi(t)=\sin(t/4)$, $t\leq 0$, and a bounded (by $16\pi$) delay.
Hence, equation (\ref{1abc}) is not asymptotically stable.

Equation  (\ref{1abc}) with control $u=-(x(t)-x(h(t)))$ becomes ~~
$\displaystyle
\ddot{x}(t)+ 2 \dot{x}(t)+  x(t)=\frac{0.8 x(g(t))}{1+x^n(g(t))}$, 
and it is globally asymptotically  stable by  Theorem~\ref{th3}, Part a), since $C=0.8<b=1=a^2/4$.
Fig.~\ref{figure3} numerically illustrates the results for the constant delay $g(t)=t-\tau$.
\end{Ex}

%The results of Examples \ref{additional_example} are numerically illustrated in Fig.~\ref{figure3}.

\begin{figure}[ht]
\centering
\vspace{-15mm}
\includegraphics[scale=0.38]{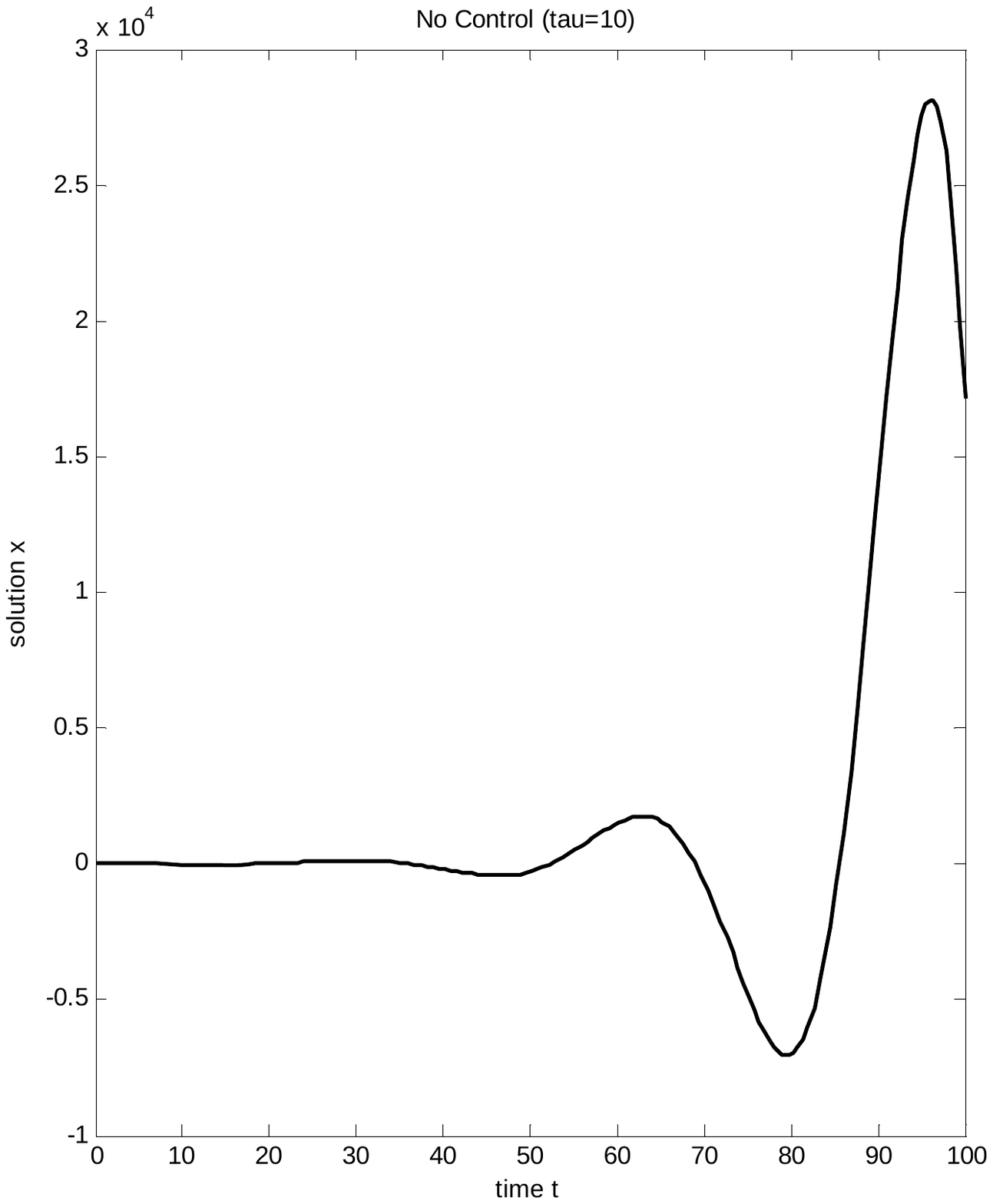} \hspace{-6mm}
\includegraphics[scale=0.38]{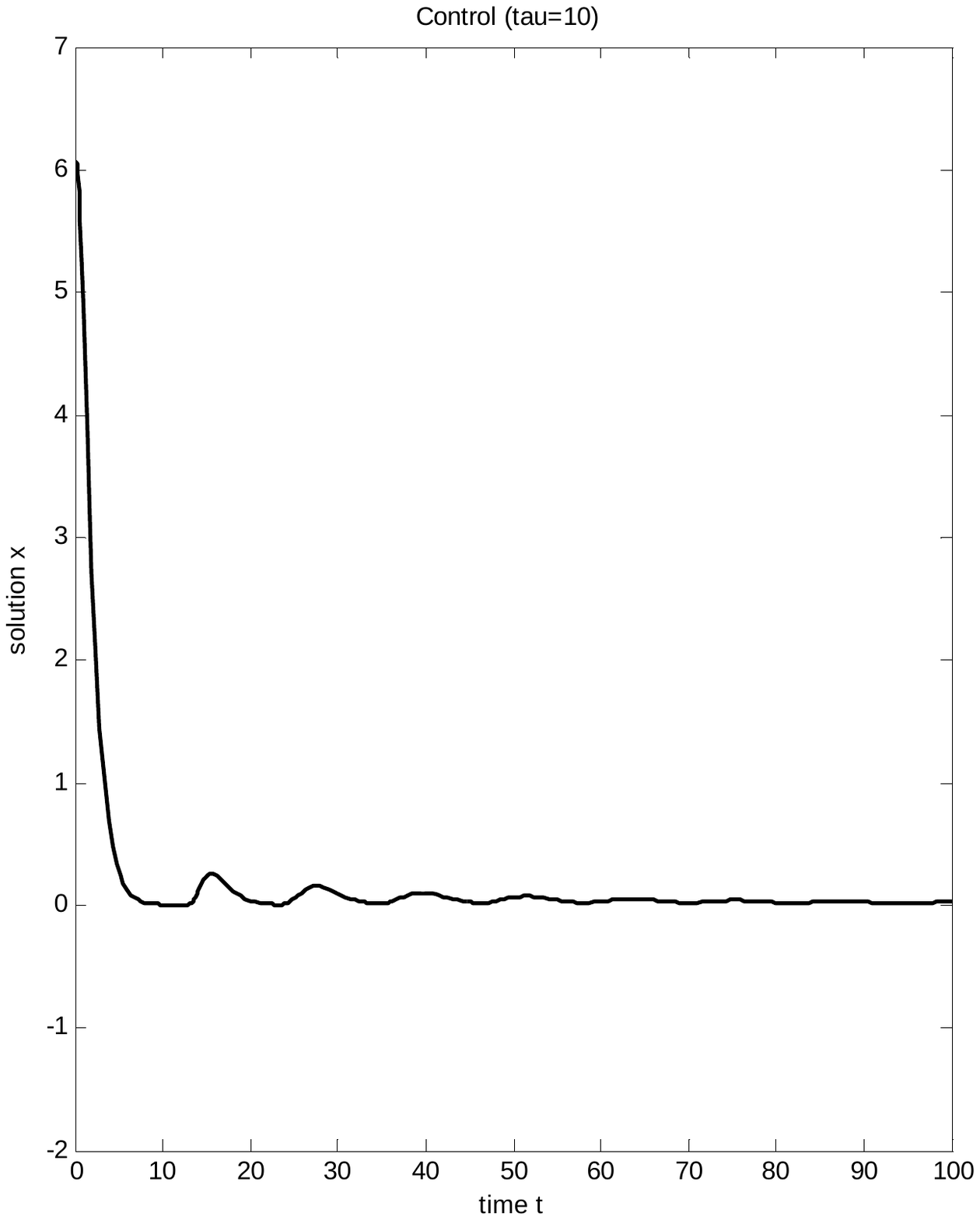}
\vspace{-28mm}
\caption{Stabilization with a proportional control for the equation
$\ddot{x}(t)+2\dot{x}+x(t-\tau))=\frac{0.8 x(t-\tau)}{1+x^{8}(t-\tau)}$ here $\tau=10$ .
The left graph illustrates an unstable (oscillating and unbounded) solution while in the right graph,
the control $u(t)=x(t-\tau)-x(t)$ produces a stable trajectory.
}
\label{figure3}
\end{figure}
Consider the nonlinear equation
\begin{equation}\label{2a}
\ddot{x}(t)+a\dot{x}(t)+f(t,x(h(t)))=0
\end{equation}
which has an equilibrium $x(t)=x^{\ast}$.
For stabilization we will use the controller $u=-K[x(t)-x^{\ast}]$, $K>0$
and obtain the equation
\begin{equation}\label{3a}
\ddot{x}(t)+a\dot{x}(t)+f(t,x(h(t)))=-K[x(t)-x^{\ast}].
\end{equation}
The substitution of $y(t)=x(t)-x^{\ast}$ into equation (\ref{3a}) yields
\begin{equation}\label{4a}
\ddot{y}(t)+a\dot{y}(t)+p(t,y(h(t)))=-Ky,
\end{equation}
where $p(t,v)=f(t,v+x^{\ast})$.

\begin{theorem}\label{th4}
Suppose $\displaystyle \left| f(t,v+x^{\ast}) \right| \leq C |v|$,  and
at least one of the conditions holds:
\\
a) $C<K\leq a^2/4$;
~~
b)  $a^2/4\leq K<a^2/2-C$;
~~
c)  $ C< a\sqrt{4K-a^2}/4$.

Then the equilibrium $x^{\ast}$ of equation (\ref{2a}) with the control
$u=-K(x(t)-x^{\ast})$ is globally asymptotically  stable.
\end{theorem}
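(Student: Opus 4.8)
The plan is to follow the proof of Theorem~\ref{th3} almost verbatim, since after the substitution $y=x-x^{\ast}$ the controlled equation~(\ref{3a}) becomes~(\ref{4a}), i.e.
\[
\ddot{y}(t)+a\dot{y}(t)+Ky(t)+p(t,y(h(t)))=0,\qquad p(t,v)=f(t,v+x^{\ast}),
\]
with $p(t,0)=0$ and $|p(t,v)|\le C|v|$: a second order equation with damping $a\dot y$, restoring term $Ky$, and a single sublinear delayed perturbation appearing in the state variable only. The controlled and the original equations~(\ref{2a}) share the equilibrium $x^{\ast}$ because $f(t,x^{\ast})=0$ and the control $-K(x-x^{\ast})$ vanishes there, so it suffices to prove that under any of a)--c) every solution $y$ of~(\ref{4a}) satisfies $y(t)\to 0$, equivalently $x(t)\to x^{\ast}$.

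For parts a) and b) I would apply Lemma~\ref{lemma2} to~(\ref{4a}) with the identifications $f(t,v,u)=au$ (hence $a_0=A=a$), $s(t,u)=Ku$ (hence $b_0=B=K$), and one delayed term $s_1(t,v,u)=p(t,u)$ with $|s_1(t,v,u)/u|\le C$, so $C_1=C$ and $\sum_k C_k=C$. With these choices condition~1) of Lemma~\ref{lemma2} reads $K\le a^2/4$ and $C<K-\frac{a}{2}(A-a_0)=K$, which is precisely a); condition~2) reads $K\ge \frac{a}{2}(A-\frac{a_0}{2})=a^2/4$ and $C<\frac{a^2}{2}-K$, which is precisely b). In either case zero is a global attractor for the solutions of~(\ref{4a}), hence $x(t)\to x^{\ast}$.

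For part c), where the inequality $C<a\sqrt{4K-a^2}/4$ forces $4K>a^2$, I would use the linearization (``freezing'') device from the proof of Theorem~\ref{th3}, Part~c). Given a fixed solution $y$ of~(\ref{4a}), set $Q(t)=p(t,y(h(t)))/y(h(t))$ when $y(h(t))\neq 0$ and $Q(t)=0$ otherwise; then $|Q(t)|\le C$ and $y$ solves the linear equation $\ddot{z}(t)+a\dot{z}(t)+Kz(t)+Q(t)z(h(t))=0$, which is of the form~(\ref{4}) with the constants $a$, $b=K$ and no delayed-derivative term, so in the notation of~(\ref{6}) we have $\alpha=0$ and $\beta=\limsup_{t\to\infty}|Q(t)|\le C$. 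With $\alpha=0$ the second inequality of~(\ref{5}) reduces to $\beta<a\sqrt{4K-a^2}/4$, which holds since $\beta\le C<a\sqrt{4K-a^2}/4$; thus Lemma~\ref{lemma1} gives global exponential stability of the linear equation, whence $y(t)\to 0$ and $x(t)\to x^{\ast}$.

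This is bookkeeping rather than a hard argument; the only genuine points to verify are that the frozen coefficient $Q$ is Lebesgue measurable and essentially bounded, so that Lemma~\ref{lemma1} applies (this follows from measurability of $f(\cdot,v)$, continuity of the solution, and the bound $|p(t,v)|\le C|v|$), and, for parts a) and b), that the delay $t-h(t)$ is bounded as required by Lemma~\ref{lemma2} (which should be listed among the standing hypotheses). I do not anticipate any real obstacle beyond these checks.
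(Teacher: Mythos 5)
Your proposal is correct and follows exactly the route the paper takes: the paper's own (one-line) proof of Theorem~\ref{th4} simply invokes Lemmas~\ref{lemma1} and \ref{lemma2} on equation (\ref{4a}), and your detailed verification — parts a), b) from conditions 1), 2) of Lemma~\ref{lemma2} with $a_0=A=a$, $b_0=B=K$, $C_1=C$, and part c) via the freezing of $p(t,y(h(t)))/y(h(t))$ and Lemma~\ref{lemma1} with $\alpha=0$, $\beta\le C$ — is precisely the argument already spelled out for Theorem~\ref{th3} that the paper implicitly reuses. Your added remarks on measurability of the frozen coefficient and boundedness of the delay are reasonable housekeeping that the paper leaves tacit.
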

\begin{proof}
Equation (\ref{4a}) has the form
$
\ddot{y}(t)+a\dot{y}(t)+Ky=-p(t,y(h(t))),
$
and application of Lemmas~\ref{lemma1} and \ref{lemma2} concludes the proof.
\end{proof}

To illustrate application of Theorem \ref{th4}, consider the sunflower equation
\begin{equation}\label{5a}
\ddot{x}(t)+a\dot{x}(t)+A\sin (\omega x(h(t)))=0,~~ a,A,\omega>0.
\end{equation}
This equation has an infinite number of unstable equilibrium points $x=\frac{(2k+1)\pi}{\omega}$, $k=0,1,\dots$, see \cite{BBI}.
To stabilize a fixed equilibrium $x^*=\frac{(2k+1)\pi}{\omega}$ of equation \eqref{5a}, we choose the
controller $u=-K\left[x(t)-\frac{(2k+1)\pi}{\omega}\right], K>0$, i.e.
\begin{equation}\label{6a}
\ddot{x}(t)+a\dot{x}(t)+A\sin (\omega x(h(t)))=-K\left[x(t)-\frac{(2k+1)\pi}{\omega}\right].
\end{equation}
Since $\displaystyle \left|A\sin (\omega v)\right|\leq A\omega |v|$, Theorem~\ref{th4} implies the following result.

\begin{Cor}\label{cor5}
Suppose at least one of the conditions holds:
\\
a) $A\omega<K\leq a^2/4$;
~~
b)  $a^2/4\leq K<a^2/2-A\omega$;
~~
c)  $A\omega<  a \sqrt{4K-a^2}/4$.

Then the equilibrium $x^{\ast}=\frac{(2k+1)\pi}{\omega}$ of equation (\ref{6a})
is globally asymptotically  stable.
\end{Cor}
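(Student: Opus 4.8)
The plan is to obtain Corollary~\ref{cor5} as a direct specialization of Theorem~\ref{th4} to the sunflower nonlinearity $f(t,x)=A\sin(\omega x)$, so that the whole argument reduces to checking a single Lipschitz-type bound with an explicit constant.

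First I would confirm that the constant function $x(t)\equiv x^{\ast}=\frac{(2k+1)\pi}{\omega}$ is genuinely an equilibrium of \eqref{5a}: substituting it gives $A\sin(\omega x^{\ast})=A\sin\big((2k+1)\pi\big)=0$, so \eqref{6a} is exactly \eqref{3a} with $f(t,x)=A\sin(\omega x)$ and this choice of $x^{\ast}$, and the substitution $y=x-x^{\ast}$ turns it into \eqref{4a} with $p(t,v)=f(t,v+x^{\ast})$. The global solution assumption adopted in the paper covers \eqref{6a}, so Theorem~\ref{th4} is applicable once its hypothesis is verified.

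The one computation that needs care is the bound $\left|f(t,v+x^{\ast})\right|\le C|v|$. Here $f(t,v+x^{\ast})=A\sin\big(\omega v+(2k+1)\pi\big)=-A\sin(\omega v)$ by the oddness and $2\pi$-periodicity of $\sin$, hence $\left|f(t,v+x^{\ast})\right|=A\left|\sin(\omega v)\right|\le A\omega\,|v|$ using $|\sin u|\le|u|$. Thus the hypothesis of Theorem~\ref{th4} holds with $C=A\omega$, and the constant is the same for every $k$.

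Finally I would observe that conditions a), b), c) of Corollary~\ref{cor5} are literally conditions a), b), c) of Theorem~\ref{th4} after the substitution $C=A\omega$; invoking Theorem~\ref{th4} then yields global asymptotic stability of $x^{\ast}$ for \eqref{6a}. There is no substantive obstacle in this argument — the only place where structure is used is the shift identity $\sin\big(\omega v+(2k+1)\pi\big)=-\sin(\omega v)$, which is what allows the linear bound, and hence all three stability conditions, to be stated uniformly in $k$.
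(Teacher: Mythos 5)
Your proposal is correct and follows exactly the paper's own route: the paper likewise obtains Corollary~\ref{cor5} by applying Theorem~\ref{th4} with $C=A\omega$, justified by the bound $\left|A\sin(\omega v)\right|\le A\omega|v|$. Your added detail on the shift identity $\sin\bigl(\omega v+(2k+1)\pi\bigr)=-\sin(\omega v)$ simply makes explicit the step the paper leaves implicit.
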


\section{Summary}

The results of the paper can be summarized as follows:
\begin{enumerate}
\item
For a wide class of nonlinear delay second order equations,
we developed stabilizing controls combining the proportional feedback with
the proportional derivative feedback.
\item
We designed a standard feedback controller which allows to stabilize a second order nonlinear
equation with a linear nondelay damping term.
\end{enumerate}

The results are illustrated using nonlinear models with several equilibrium points, for example, modifications of the
sunflower equation.

\subsection{Acknowledgments}

L. Berezansky was partially supported by Israeli Ministry of Absorption,
E. Braverman was partially supported by the NSERC DG Research Grant,
L. Idels was partially supported by a grant from VIU.
The authors are very grateful to the editor and the anonymous referees whose comments significantly contributed to the 
better presentation of the 
results of the paper.

\end{document}